\numberwithin{theorem}{section}
\numberwithin{definition}{section}
\numberwithin{proposition}{section}
\numberwithin{lemma}{section}
\numberwithin{remark}{section}
\numberwithin{example}{section}
\numberwithin{algorithm}{section}
\begin{document}

\title{Generalized Seikkala Derivatives and their application for solving Fuzzy Wave Equation}

\titlerunning{}        
\author{U. M. Pirzada}


\author{U. M. Pirzada         \and
        Raju K. George 
}

\institute{U. M. Pirzada \at
              School of Engineering and Technology, Navrachana University, Vadodara 391410, Gujarat, India. \\
              \email{salmapirzada@yahoo.com}           
           \and
          Raju K. George \at
              Department of Mathematics, Indian Institute of Space Science and Technology, thiruvananthapuram, Kerala 695 547, India.\\
	      \email{rkg.iist@gmail.com}  
}

\date{}

\maketitle

\begin{abstract}
This paper presents the new generalized Seikkala derivatives (gS- derivatives) of fuzzy-valued functions. The solution of fuzzy wave equation is proposed and analyzed using gS-derivatives whose crisp solution is expressed in terms of Fourier series. 
\keywords{Fuzzy-valued function \and Fuzzy wave equation \and Seikkala derivatives}
\subclass{34A07 \and 35L05}
\end{abstract}

\section{Introduction}

The fuzzy partial differential equation (FPDE) means the generalization of partial differential equation (PDE) in fuzzy sense. While doing modeling of real situation in terms of partial differential equation, we see that the variables and parameters involve in the equations are uncertain (in the sense that they are not completely known or inexact or imprecise). Often common initial or boundary condition of ambient temperature is a fuzzy condition since ambient temperature is prone to variation in a range. We express this impreciseness and uncertainties in terms of fuzzy numbers. So we come across with fuzzy partial differential equations . In \cite{BU}, Buckley and Feuring (1999) proposed a procedure to examine solutions of elementary fuzzy partial differential equations. First they verified the Buckley - Feuring (BF) solution exist or not. If the BF-solution fails to exist they looked for the Seikkala solution. The solutions are based on Seikkala derivative introduced in \cite{SE}. Their proposed method works for elementary fuzzy partial differential equations. They assumed the solution of FPDE is not defined in terms of series. \\

\subsection{Brief literature survey}

In \cite{AL}, Allahviranloo (2002) proposed difference method to solve FPDEs. This method was based on Seikkala derivative of fuzzy functions. The Adomian method was studied to find the approximate solution of fuzzy heat equation in \cite{AL1}(2009). While in \cite{AL2}, Allahviranloo and Afshar (2010) presented numerical methods for solving the fuzzy partial differential equations. These numerical methods were based on the derivative due to Bede and Gal \cite{BE}. Mahmoud and Iman \cite{MA} (2013) presented finite volume method that solves some FPDEs such as fuzzy hyperbolic equations, fuzzy parabolic equations and fuzzy elliptic equations. They have obtained explicit, implicit and Crank-Nicolson schemes for solving fuzzy heat equation. Study of heat, wave and Poisson equations with uncertain parameters are given in \cite{BE1} (2013). Allahviranloo et al. have studied fuzzy solutions for fuzzy heat equation with fuzzy initial value based on generalized Hukuhara differentiability in \cite{AL3}(2014). Pirzada  and Vakaskar have discussed the solution of fuzzy heat equations using Adomian Decomposition in \cite{PI1} (2015). Solution of fuzzy heat equation under
fuzzified thermal diffusivity is discussed by Pirzada and Vakaskar in \cite{PI2} (2017). Fuzzy solution of homogeneous heat equation having solution in Fourier series form is analyized in \cite{PI3}(2019).

Applications to FPDEs are presented with a new inference method in \cite{CH}(2009). B.A. Faybishenko \cite{FA} presented a hydrogeologic system as a fuzzy system in (2004). He derived a fuzzy logic form of parabolic-type partial differential equation and solved using basic principles of fuzzy arithmetic. The exact solutions of fuzzy wave-like equations with variable coefficients by a variational iteration method is proposed in \cite{AL4} (2011). Series solution of fuzzy wave-like equations with variable coefficients were presented in \cite{HA}(2013). Biswas and Roy have defined generalization of Seikkala derivative and solved fuzzy Volterra integro-differential equations using differential transform method in \cite{BI} (2018).

\subsection{Motivation and novelty of the proposed work}

Limitations in other fuzzy derivatives:
\begin{itemize}
\item In the solution of fuzzy differential equation, we need differentiability of level functions of fuzzy-valued function. In Sekkala derivatives, level functions are differentiable but condition $f_{1}^{\prime}(\alpha) \leq f_{2}^{\prime}(\alpha)$, for all $\alpha$ may not satisfied for many fuzzy-valued functions.
\item Hukuhara derivatives are based on Hukuhara difference which exists in very restrictive situation.
\item Generalized H-derivatives are less restrictive but in this differentiability, the level functions may not be differentiable.
\end{itemize}

Another motivation for the current study is the following: \\

Buckley and Feuring \cite{BU} have introduced BF-solution of non-homogeneous elementary fuzzy partial differential equations in the form $\phi(D_{t},D_{x})\tilde{U} = \tilde{F}(x,t, \tilde{K})$. If we consider a homogeneous fuzzy partial differential equation, i.e. $\tilde{F}(x, t, \tilde{K}) = \tilde{0}$ then we can not apply sufficient condition to find a BF-solution. \\

With above motivations, we propose the new generalized Seikkala derivative of fuzzy-valued function which is appropriate for solution of fuzzy differential equations and it is less restrictive. Moreover, we find solutions of second order homogeneous fuzzy wave equation based on Seikkala solution approach. Using generalized Seikkala derivative, we solve fuzzy wave equation with specific fuzzy boundary and initial conditions whose crisp solution is expressed in Fourier series. \\

The paper is organized as follows. \\
The basic concepts of fuzzy numbers are given in Sec. 2. The generalized Seikkala derivative (gS-derivatives) of fuzzy-valued function is proposed in Sec. 3. Properties, relation between gS-derivative and other derivatives are discussed in the same section. The generalized Seikkala partial derivatives are also proposed in the section. Sec. 4 deals with the solution of fuzzy wave equation with specific fuzzy boundary and initial conditions. Analysis of solution based on Fourier series is given in Sec. 5. We conclude in the last Section. 


\section{Fuzzy numbers and arithmetics}
We start with some basic definitions.
\label{sec:2}
\begin{definition}\label{def1} A fuzzy set $\tilde{a}$ with membership function $\tilde{a}:\mathbb{R} \to[0,1]$, where $\mathbb{R}$ is the set of real numbers, is called a fuzzy number if it is normal, upper semi-continuous, quasi-concave function and closure of the set $\{x \in \mathbb{R} / \tilde{a}(x) >0\}$ is compact. The set of all fuzzy numbers on $\mathbb{R}$ is denoted by $F(\mathbb{R})$. 
\end{definition}

\begin{definition}
For all $\alpha \in (0,1]$, $\alpha$-level set $\tilde{a}_{\alpha}$ of any $\tilde{a}\in F(\mathbb{R})$ is defined as 
\begin{eqnarray*}
\tilde{a}_{\alpha} = \{ x \in \mathbb{R}/ \tilde{a}(x)\geq \alpha \} .
\end{eqnarray*} 
The 0-level set $\tilde{a}_{0}$ is defined as the closure of the set 
\begin{eqnarray*}
\{x \in \mathbb{R} / \tilde{a}(x) >0\}.
\end{eqnarray*} 
\end{definition}

The following Theorem of Goetschel and Voxman \cite{GO}, shows the characterization of a fuzzy number in terms of its $\alpha$-level sets.
\begin{theorem}\label{thm21}
For $\tilde{a} \in F(\mathbb{R})$, define two functions ${a}_{1}(\alpha),{a}_{2}(\alpha): [0,1] \to \mathbb{R} $. Then
\begin{enumerate}
\item [(i)] {${a}_{1} (\alpha)$ is bounded left continuous non-decreasing function on (0,1];}
\item [(ii)] {${a}_{2}(\alpha)$ is bounded left continuous non-increasing function on (0,1];}
\item [(iii)] {${a}_{1}(\alpha)$ and ${a}_{2}(\alpha)$ are right continuous at $\alpha = 0$;}
\item [(iv)] {${a}_{1}(\alpha) \leq {a}_{2}(\alpha)$.}
\end{enumerate}
Moreover, if the pair of functions ${a}_{1}(\alpha)$ and ${a}_{2}(\alpha)$ satisfy the conditions (i)-(iv), then there exists a unique $\tilde{a} \in F(\mathbb{R})$ such that $\tilde{a}_{\alpha} = [{a}_{1}(\alpha), {a}_{2}(\alpha)]$, for each $\alpha \in [0,1]$.
\end{theorem}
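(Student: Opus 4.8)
The plan is to prove the two implications separately. For the forward direction, I would first show that for every $\tilde{a}\in F(\mathbb{R})$ and every $\alpha\in(0,1]$ the level set $\tilde{a}_{\alpha}$ is a nonempty compact interval, which is what makes the definitions $a_{1}(\alpha)=\min\tilde{a}_{\alpha}$ and $a_{2}(\alpha)=\max\tilde{a}_{\alpha}$ legitimate. Normality supplies a point with membership $1$, hence nonemptiness of every $\tilde{a}_{\alpha}$; upper semicontinuity makes each $\tilde{a}_{\alpha}$ closed; quasi-concavity is exactly the statement that every upper level set is convex, so $\tilde{a}_{\alpha}$ is an interval; and since $\tilde{a}_{\alpha}\subseteq\tilde{a}_{0}$ and $\tilde{a}_{0}$ is compact, $\tilde{a}_{\alpha}$ is bounded, hence a compact interval. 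Property (iv) is then immediate, and the definition of $\tilde{a}_{0}$ as the closure of the support makes $\tilde a_0$ a compact interval as well, so $a_1(0),a_2(0)$ are well defined.

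Next I would establish (i)--(iii). If $0<\alpha\le\beta\le1$ then $\tilde{a}_{\beta}\subseteq\tilde{a}_{\alpha}$, which forces $a_{1}(\alpha)\le a_{1}(\beta)$ and $a_{2}(\beta)\le a_{2}(\alpha)$; boundedness of both functions follows from $\tilde{a}_{\alpha}\subseteq\tilde{a}_{0}$. The analytic heart of this direction is left continuity on $(0,1]$: for $\alpha\in(0,1]$ one has the nesting identity $\tilde{a}_{\alpha}=\bigcap_{0<\beta<\alpha}\tilde{a}_{\beta}$, because $x\in\tilde{a}_{\beta}$ for all $\beta<\alpha$ means $\tilde{a}(x)\ge\sup_{\beta<\alpha}\beta=\alpha$. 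Combined with monotonicity this yields $a_{1}(\alpha)=\sup_{\beta<\alpha}a_{1}(\beta)$ and $a_{2}(\alpha)=\inf_{\beta<\alpha}a_{2}(\beta)$, i.e. left continuity at $\alpha$. For (iii), the relation $\tilde{a}_{0}=\overline{\bigcup_{\alpha>0}\tilde{a}_{\alpha}}$ gives $a_{1}(0)=\inf_{\alpha>0}a_{1}(\alpha)=\lim_{\alpha\to0^{+}}a_{1}(\alpha)$ and similarly for $a_{2}$, which is right continuity at $0$.

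For the converse, given $a_{1},a_{2}$ satisfying (i)--(iv), I would define
\[
\tilde{a}(x)=\sup\{\alpha\in(0,1]:a_{1}(\alpha)\le x\le a_{2}(\alpha)\},
\]
with the convention that the supremum of the empty set is $0$, and then verify the four defining properties of a fuzzy number. Normality is witnessed by any point of the nonempty interval $[a_{1}(1),a_{2}(1)]$; quasi-concavity follows once the upper level sets are known to be intervals; and the closure of the support is $[a_{1}(0),a_{2}(0)]$ by the right continuity at $0$, hence compact. The decisive step---and the step I expect to be the main obstacle---is proving the exact identity $\{x:\tilde{a}(x)\ge\alpha\}=[a_{1}(\alpha),a_{2}(\alpha)]$ for every $\alpha\in(0,1]$: the inclusion $\supseteq$ is routine from the definition of $\tilde a$, while $\subseteq$ requires the left continuity of $a_{1}$ and $a_{2}$ to guarantee that the supremum defining $\tilde{a}(x)$ is attained, so that the two sets agree at their endpoints rather than differing by a half-open interval. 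This identity simultaneously delivers upper semicontinuity (each upper level set is closed) and the desired representation $\tilde{a}_{\alpha}=[a_{1}(\alpha),a_{2}(\alpha)]$. Uniqueness is then immediate, since any $\tilde{b}\in F(\mathbb{R})$ with $\tilde{b}_{\alpha}=[a_{1}(\alpha),a_{2}(\alpha)]$ for all $\alpha$ must satisfy $\tilde{b}(x)=\sup\{\alpha:x\in\tilde{b}_{\alpha}\}=\tilde{a}(x)$.
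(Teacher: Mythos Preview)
Your outline is a correct and complete sketch of the standard Goetschel--Voxman argument: the forward direction via the nesting identity $\tilde a_\alpha=\bigcap_{\beta<\alpha}\tilde a_\beta$ for left continuity and the closure-of-support description for right continuity at $0$, and the converse via $\tilde a(x)=\sup\{\alpha:a_1(\alpha)\le x\le a_2(\alpha)\}$ with left continuity used to show the supremum is attained. There is nothing to fault in the mathematics.

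However, there is no proof in the paper to compare against. The authors state Theorem~\ref{thm21} as a quoted result of Goetschel and Voxman \cite{GO} and do not supply any argument of their own; the theorem is used only as background (specifically, later in Section~5 the monotonicity of $U_{01}(\alpha)$ and $U_{02}(\alpha)$ is invoked). So your proposal is not an alternative to the paper's proof but rather a reconstruction of the cited external result. If your aim is to align with the paper, a one-line citation to \cite{GO} is all that is expected here; if your aim is a self-contained writeup, your plan is the right one and needs no change.
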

%
%
\begin{definition}\label{def3} According to Zadeh's extension principle, scalar multiplication of fuzzy number $\tilde{a}$ with a scalar $\lambda \in \mathbb{R}$ by its $\alpha$-level sets is defined as follows:
\begin{eqnarray*}
(\lambda \odot \tilde{a})_{\alpha} & = &
[\lambda\cdot {a}_{1}(\alpha),\lambda\cdot {a}_{2}(\alpha)],~if~\lambda \geq 0 \\
			           & = &
[\lambda\cdot {a}_{2}(\alpha),\lambda\cdot {a}_{1}(\alpha)],~if~\lambda < 0, 
\end{eqnarray*}
where $\alpha$-level sets of $\tilde{a}$ is $\tilde{a}_{\alpha} = [{a}_{1}(\alpha), {a}_{2}(\alpha)]$, for $\alpha \in [0,1]$. 
\end{definition}
The fuzzy-valued function is defined as follows:
\begin{definition}  \label{def4}
A function $\tilde{f}: V \to F(\mathbb{R})$ is called a fuzzy-valued function, where $V$ is a real vector space. That is, for each $x \in V$, $\tilde{f}(x)$ is a fuzzy number. Corresponding to $\tilde{f}$ and $\alpha \in [0,1]$, we denote two real-valued functions ${f}_{1}(x,\alpha)$ and ${f }_{2}(x,\alpha)$ on $V$ for all $x \in V$. These functions ${f}_{1}(x,\alpha)$ and ${f }_{2}(x,\alpha)$ are called lower and upper $\alpha$-level functions of $\tilde{f}$, respectively.
\end{definition}

\section{Generalized Seikkala Derivatives}

Seikkala derivative of fuzzy-valued function is defined as follows. The definition is adopted from Seikkala (1987)\cite{SE}.

\begin{definition} \label{def3.1}
Let $I$ be subset of $\mathbb{R}$ and $\tilde{y}$ be a fuzzy-valued function defined on $I$. The $\alpha$-level sets $\tilde{y}_{\alpha}(t) = [y_{1}(t, \alpha), y_{2}(t, \alpha)]$ for $\alpha \in [0,1]$ and $t \in I$. We assume that derivatives of $y_{i}(t, \alpha)$, $i = 1, 2$ exist for all $t \in I$ and for each $\alpha$. 
We define $(\tilde{y}^{\prime}(t))_{\alpha} = [y_{1}^{\prime}(t, \alpha), y_{2}^{\prime}(t, \alpha)]$ for all $t \in I$, all $\alpha$. 

If, for each fixed $t \in I$, $(\tilde{y}^{\prime}(t))_{\alpha}$ defines the $\alpha$-level set of a fuzzy number, then we say that Seikkala derivative of $\tilde{y}(t)$ exists at $t$ and it is denoted by fuzzy-valued function $\tilde{y}^{\prime}(t)$.
\end{definition}

The Seikkala derivative involves two steps:
\begin{enumerate}
\item [(1)] Check both level functions are differentiable or not
\item [(2)] Check level sets of derivatives define fuzzy numbers or not.
\end{enumerate}

Sufficient conditions for $(\tilde{y}^{\prime}(t))_{\alpha}$ to define $\alpha$-level sets of a fuzzy number are \cite{BU}:
\begin{enumerate}
\item [(i)] $y_{1}^{\prime}(t, \alpha)$ is an increasing function of $\alpha$ for each $t \in I$;
\item [(ii)] $y_{2}^{\prime}(t, \alpha)$ is an decreasing function of $\alpha$ for each $t \in I$;
\item [(iii)] $y_{1}^{\prime}(t, 1) \leq y_{2}^{\prime}(t, 1)$ for all $t \in I$.
\end{enumerate}

There are certain functions which exist in real situation but their Seikkala derivatives do not exist. We consider two such examples.

\begin{example}\label{ex3.1}
Consider a fuzzy-valued function $\tilde{g}(t) = \tilde{a} \odot \exp(-t)$, $t \in \mathbb{R}$ and $\tilde{a}$ is a fuzzy number with $\alpha$-level sets

\[(\tilde{g}(t))_{\alpha} = [g_{1}(t, \alpha), g_{2}(t, \alpha)] = [a_{1}(\alpha) \exp(-t), a_{2}(\alpha) \exp(-t)].\]

To check Seikkala differentiability of given fuzzy-valued function, first we check both its level functions are differentiable or not. 
We see that $g_{1}(t, \alpha)$ and $g_{2}(t, \alpha)$ are differentiable for $t \in \mathbb{R}$. Next, we check that the level sets 

\[(\tilde{g}^{\prime}(t))_{\alpha} = [g_{1}^{\prime}(t, \alpha), g_{2}^{\prime}(t, \alpha)] = [-a_{1}(\alpha)\exp(-t), -a_{2}(\alpha)\exp(-t)]\]
define a fuzzy number for each $t \in \mathbb{R}$. By checking sufficient conditions for $(\tilde{g}^{\prime}(t))_{\alpha}$ to define $\alpha$-level sets of fuzzy number,
\begin{enumerate} 
\item [(i)] $g_{1}^{\prime}(t, \alpha)$ is an increasing function of $\alpha$ for each $t \in \mathbb{R}$;
\item [(ii)] $g_{2}^{\prime}(t, \alpha)$ is a decreasing function of $\alpha$ for each $t \in \mathbb{R}$; and
\item [(iii)] $g_{1}^{\prime}(t, 1) \leq g_{2}^{\prime}(t, 1)$, for all $t \in \mathbb{R}$,
\end{enumerate} 
we see that  

\[{{\partial g_{1}^{\prime}(t, \alpha)} / {\partial \alpha}} = -a_{1}^{\prime}(\alpha) \exp(-t) < 0\] as $a_{1}^{\prime}(\alpha) > 0$ and 

\[{{\partial g_{2}^{\prime}(t, \alpha)}/ {\partial \alpha}} = -a_{2}^{\prime}(\alpha) \exp(-t) > 0\] as $a_{2}^{\prime}(\alpha) < 0$. Therefore $g_{1}^{\prime}(t, \alpha)$ is not an increasing function and $g_{2}^{\prime}(t, \alpha)$ is not a decreasing function. Hence, Seikkala derivative of $\tilde{g}$ does not exist.
\end{example}

We consider another example of fuzzy function which occur in uncertain periodic motion of an object whose Seikkala derivative does not exist.

\begin{example}\label{ex3.2}
Consider a fuzzy-valued function $\tilde{h}(t) = \tilde{a} \odot \sin(t)$, $t \in [0, \pi]$, where $\tilde{a}$ is a fuzzy number. The $\alpha$- level sets of $\tilde{h}(t)$ are $[a_{1}(\alpha) \sin(t), a_{2}(\alpha) \sin(t)]$. The level functions are differentiable but their derivatives $h_{1}^{\prime}(t, \alpha) = a_{1}(\alpha) \cos(t)$ and $h_{2}^{\prime}(t, \alpha) = a_{2}(\alpha) \cos(t)$ do not define fuzzy number for each $t \in [\pi/2, \pi]$ and hence $\tilde{h}$ is not Seikkala differentiable for $t \in [\pi/2, \pi]$.
\end{example}

To overcome this limitation, we define the generalized Seikkala derivative (gS-derivative) of a fuzzy-valued function

\begin{definition}\label{def3.2}
Let $I$ be a real interval. A fuzzy-valued function $\tilde{f}: I \to F(\mathbb{R})$ with $\alpha$-level sets

\[(\tilde{f}(t))_{\alpha} = [ f_{1}(t, \alpha),  f_{2}(t, \alpha)],\]

for $t \in I$ and $ \alpha \in [0, 1]$, is said to have generalized Seikkala derivative $\tilde{f}^{\prime}(t)$ if  $f_{1}(t, \alpha)$ and  $f_{2}(t, \alpha)$ are differentiable for each $t \in I$ and 
\[(f^{\prime}(t))_{\alpha} = [ \min \{f_{1}^{\prime}(t, \alpha), f_{2}^{\prime}(t, \alpha)\}, \max\{ f_{1}^{\prime}(t, \alpha), f_{2}^{\prime}(t, \alpha)\}], \]
for all $\alpha$ defines a fuzzy number for each $t \in I$. 
\end{definition}

Biswas and Roy have defined generalization of Seikkala derivative in \cite{BI} (2018). 

\begin{definition}\label{def3.3}
Let $\tilde{f} : (a, b) \to F(\mathbb{R})$ and $t_{0} \in (a, b)$. Then the generalized Seikkala derivative (gS-derivative) of $\tilde{f}(t)$ at $t_{0}$ is denoted $\tilde{f}^{\prime}(t_{0})$ and defined by
\begin{enumerate}
\item [(i)] if $f_{1}^{\prime}(t_{0}, \alpha)$, $f_{2}^{\prime}(t_{0}, \alpha)$ exist and $f_{1}^{\prime}(t_{0}, \alpha) \leq f_{2}^{\prime}(t_{0}, \alpha)$ then
\[
f_{\alpha}^{\prime}(t_{0}) = [f_{1}^{\prime}(t_{0}, \alpha), f_{2}^{\prime}(t_{0}, \alpha)]
\]
\item [(ii)] if $f_{1}^{\prime}(t_{0}, \alpha)$, $f_{2}^{\prime}(t_{0}, \alpha)$ exist and $f_{1}^{\prime}(t_{0}, \alpha) \geq f_{2}^{\prime}(t_{0}, \alpha)$ then
\[
f_{\alpha}^{\prime}(t_{0}) = [f_{2}^{\prime}(t_{0}, \alpha), f_{1}^{\prime}(t_{0}, \alpha)]
\]
\end{enumerate}
\end{definition}
The relation between Definition \ref{def3.2} and \ref{def3.3} of generalized Seikkala differentiability is given below.
\begin{theorem}
Definition \ref{def3.2} and \ref{def3.3} are equivalent.
\end{theorem}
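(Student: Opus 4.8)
The plan is to show that, for every fuzzy-valued function $\tilde f$, every point $t$ of the common domain and every $\alpha\in[0,1]$, the interval assigned to the derivative by Definition \ref{def3.2} is \emph{literally the same} as the one assigned by Definition \ref{def3.3}. Once this pointwise identity of $\alpha$-cuts is established, the two definitions are seen to impose exactly the same two requirements — differentiability of the level functions $f_1(\cdot,\alpha)$ and $f_2(\cdot,\alpha)$, together with the condition that the resulting family $\{(f'(t))_\alpha\}_{\alpha}$ satisfy the Goetschel--Voxman conditions of Theorem \ref{thm21}, i.e.\ defines a fuzzy number — so both the existence of the gS-derivative and its value coincide.

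The heart of the argument is an elementary identity for real numbers. Writing $a=f_1'(t,\alpha)$ and $b=f_2'(t,\alpha)$, one has $[\min\{a,b\},\max\{a,b\}]=[a,b]$ when $a\le b$ and $[\min\{a,b\},\max\{a,b\}]=[b,a]$ when $a\ge b$. Since any two reals are comparable, for each fixed $t_0$ and each $\alpha$ exactly one alternative of Definition \ref{def3.3} applies — both apply, consistently, in the degenerate case $a=b$, each returning $[a,a]$ — and in either case the interval it prescribes is precisely $[\min\{f_1'(t_0,\alpha),f_2'(t_0,\alpha)\},\max\{f_1'(t_0,\alpha),f_2'(t_0,\alpha)\}]$, which is exactly the interval of Definition \ref{def3.2}.

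Both implications then follow at once. If $\tilde f$ is gS-differentiable at $t_0$ in the sense of Definition \ref{def3.2}, the level functions are differentiable and $[\min\{f_1',f_2'\},\max\{f_1',f_2'\}]$ defines a fuzzy number; by the identity this fuzzy number is exactly the one whose $\alpha$-cuts are given case-wise by Definition \ref{def3.3}, so $\tilde f$ is gS-differentiable at $t_0$ in the Biswas--Roy sense with the same derivative. Conversely, if the case-wise prescription of Definition \ref{def3.3} yields a fuzzy number at $t_0$, the same identity shows its $\alpha$-cuts equal $[\min\{f_1',f_2'\},\max\{f_1',f_2'\}]$, hence Definition \ref{def3.2} applies and returns the same fuzzy-valued derivative.

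The one point requiring care — and where I expect a careful reader to pause — is that the two cases in Definition \ref{def3.3} must be read $\alpha$ by $\alpha$: for a fixed $t_0$ the sign of $f_1'(t_0,\alpha)-f_2'(t_0,\alpha)$ may change with $\alpha$, so the derivative fuzzy number may be assembled from the first case on some level sets and from the second on others. This causes no inconsistency, since the $\min/\max$ formula of Definition \ref{def3.2} is precisely what glues the two pieces into one family of nested intervals; and in both formulations it is this family that must still verify conditions (i)--(iv) of Theorem \ref{thm21} to constitute a genuine fuzzy number. Making this reading explicit and checking the degenerate equality case is all that remains to complete the proof.
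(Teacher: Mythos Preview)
Your argument is correct; the paper itself omits the proof entirely, declaring it ``straight forward,'' and your elementary $\min/\max$ identity for the two cases is exactly the routine verification the authors had in mind. Your additional remark that the case split in Definition~\ref{def3.3} must be read $\alpha$ by $\alpha$ is a useful clarification that the paper does not make explicit.
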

\begin{proof}
The proof is straight forward and therefore omitted. \qed
\end{proof}

\begin{theorem}
If $\tilde{f}$ is Seikkala differentiable by Definition \ref{def3.1} then it is gS-differentiable by Definition \ref{def3.2} and \ref{def3.3}.
\end{theorem}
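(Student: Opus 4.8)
The plan is to observe that Seikkala differentiability in the sense of Definition \ref{def3.1} already supplies, verbatim, both ingredients demanded by Definition \ref{def3.2}, and then to pass to Definition \ref{def3.3} for free via the equivalence theorem just proved. So the argument splits into: (a) the level functions are differentiable; (b) the $\min/\max$ interval of Definition \ref{def3.2} coincides with the Seikkala interval $(\tilde f'(t))_\alpha$; (c) hence it defines a fuzzy number; (d) invoke equivalence of Definitions \ref{def3.2} and \ref{def3.3}.

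First I would recall that by Definition \ref{def3.1} the hypothesis includes differentiability of $f_{1}(\cdot,\alpha)$ and $f_{2}(\cdot,\alpha)$ at each $t\in I$ for every $\alpha$, which is exactly the first requirement of Definition \ref{def3.2}. Next, the defining condition of Definition \ref{def3.1} is that, for fixed $t$, the family $\{[f_{1}'(t,\alpha),f_{2}'(t,\alpha)]\}_{\alpha\in[0,1]}$ is the system of $\alpha$-level sets of a fuzzy number; by Theorem \ref{thm21}, condition (iv), this forces $f_{1}'(t,\alpha)\le f_{2}'(t,\alpha)$ for every $\alpha$ (a level set is a nonempty compact interval, so its left endpoint cannot exceed its right endpoint). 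Therefore $\min\{f_{1}'(t,\alpha),f_{2}'(t,\alpha)\}=f_{1}'(t,\alpha)$ and $\max\{f_{1}'(t,\alpha),f_{2}'(t,\alpha)\}=f_{2}'(t,\alpha)$, so
\[
[\min\{f_{1}'(t,\alpha),f_{2}'(t,\alpha)\},\ \max\{f_{1}'(t,\alpha),f_{2}'(t,\alpha)\}]=[f_{1}'(t,\alpha),f_{2}'(t,\alpha)]=(\tilde f'(t))_{\alpha}
\]
termwise. Since the right-hand side was assumed to define a fuzzy number at $t$, and $t\in I$ was arbitrary, $\tilde f$ is gS-differentiable in the sense of Definition \ref{def3.2}, with the same derivative fuzzy number. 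Applying the theorem that Definitions \ref{def3.2} and \ref{def3.3} are equivalent then yields gS-differentiability in the sense of Definition \ref{def3.3} as well.

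I do not expect any genuine obstacle here; the one point that must be stated with a little care is the implication ``$[f_{1}'(t,\alpha),f_{2}'(t,\alpha)]$ is an $\alpha$-level set $\Rightarrow f_{1}'(t,\alpha)\le f_{2}'(t,\alpha)$'', i.e. that the bracket in Definition \ref{def3.1} denotes an ordered interval and not merely a formal pair — condition (iv) of Theorem \ref{thm21} closes this gap cleanly. It would also be worth appending a remark that the converse is false in general: Examples \ref{ex3.1} and \ref{ex3.2} exhibit fuzzy-valued functions for which $f_{1}'\le f_{2}'$ fails, so the $\min/\max$ rearrangement is strictly needed, showing that gS-differentiability is genuinely weaker than Seikkala differentiability.
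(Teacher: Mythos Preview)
Your proposal is correct and follows essentially the same route as the paper: both arguments use that Seikkala differentiability forces $f_{1}'(t,\alpha)\le f_{2}'(t,\alpha)$, whence the $\min/\max$ interval of Definition~\ref{def3.2} agrees with $[f_{1}'(t,\alpha),f_{2}'(t,\alpha)]$ and Definition~\ref{def3.3}(i) is satisfied directly. The only cosmetic difference is the order of verification (the paper checks Definition~\ref{def3.3} first, you check Definition~\ref{def3.2} first and invoke the equivalence theorem), and your explicit appeal to Theorem~\ref{thm21}(iv) for the inequality is a welcome bit of extra care.
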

\begin{proof}
Since $\tilde{f}$ is S-differentiable, by definition, derivatives $f_{1}^{\prime}(t, \alpha)$ and  $f_{2}^{\prime}(t, \alpha)$ exist and the set $(\tilde{f}^{\prime}(t))_{\alpha}$ defines a fuzzy number for each $t$ in domain. 
\[
(\tilde{f}^{\prime}(t))_{\alpha} = [f_{1}^{\prime}(t, \alpha), f_{2}^{\prime}(t,\alpha)],
\]
for all $\alpha \in [0,1]$. It satisfied the Definition \ref{def3.3} as $ f_{1}^{\prime}(t, \alpha) \leq f_{2}^{\prime}(t, \alpha)$. Now we write the above equation in following way
\[
(\tilde{f}^{\prime}(t))_{\alpha} = [\min\{f_{1}^{\prime}(t, \alpha), f_{2}^{\prime}(t,\alpha)\}, \max\{f_{1}^{\prime}(t, \alpha), f_{2}^{\prime}(t,\alpha)\}],
\]
for all $\alpha \in [0,1]$. Therefore, $\tilde{f}$ is gS-differentiable by Definition \ref{def3.2}. \qed
\end{proof}

\begin{remark}
If $\tilde{f}$ is gS-differentiable by Definition \ref{def3.2} then it may not be S-differentiable by Definition \ref{def3.1}. For instance, fuzzy-valued function $\tilde{g}(t) = \tilde{a} \odot \exp(-t)$, $t \in \mathbb{R}$ in Example \ref{ex3.1} is not S-differentiable. The following example shows that it is gS-differentiable by Definition \ref{def3.2}.
\end{remark}

We see that the uncertain functions defined in above examples are gS-differentiable.

\begin{example} \label{ex3.3}
Consider the fuzzy-valued function $\tilde{g}(t) = \tilde{a} \odot \exp(-t)$, $t \in \mathbb{R}$, defined in Example \ref{ex3.1}. The derivatives of level functions of $\tilde{g}(t)$ are 

\[ g_{1}^{\prime}(t, \alpha) = -a_{1}(\alpha) \exp(-t)\] and 
\[ g_{2}^{\prime}(t, \alpha) = -a_{2}(\alpha) \exp(-t).\]

By definition of gS-differentiability, $\alpha$-level sets $(\tilde{g}^{\prime}(t))_{\alpha}$ defined as

\[(\tilde{g}^{\prime}(t))_{\alpha} = [\min\{-a_{1}(\alpha) \exp(-t), -a_{2}(\alpha) \exp(-t)\}, \max\{-a_{1}(\alpha) \exp(-t), -a_{2}(\alpha) \exp(-t)\}] \]

which is equal to 

\[ (\tilde{g}^{\prime}(t))_{\alpha} = [-a_{2}(\alpha) \exp(-t), -a_{1}(\alpha) \exp(-t)]\] \\

as $- a_{2}(\alpha) \leq - a_{1}(\alpha)$ and $\exp(-t) \geq 0$, for all $t$. Therefore, $\tilde{g}$ is gS-differentiable with derivative $\tilde{g}^{\prime}(t) = -\tilde{a} \odot \exp(-t)$. 
\end{example}

\begin{example}\label{ex3.4}
The fuzzy-valued function $\tilde{h}(t)$ defined in Example \ref{ex3.2} is gS-differentiable with derivative $\tilde{h}^{\prime}(t) = \tilde{a} \odot \cos(t)$. The $\alpha$-level sets of $\tilde{h}^{\prime}(t)$ are $[a_{1}(\alpha) \cos(t), a_{2}(\alpha) \cos(t)]$ for $t \in [0, \pi/2]$ and $[a_{2}(\alpha) \cos(t), a_{1}(\alpha) \cos(t)]$ for $t \in (\pi/2, \pi]$.
\end{example}

Now we define generalized Seikkala partial derivatives of two variables fuzzy-valued function $\tilde{f} : \mathbb{R}^{2} \to F(\mathbb{R})$.

\begin{definition}
Let $X$ be a subset of $\mathbb{R}^{2}$. A fuzzy-valued function $\tilde{f}: X \to F(\mathbb{R})$ with $\alpha$-level sets

\[(\tilde{f}(x, t))_{\alpha} = [ f_{1}(x, t, \alpha),  f_{2}(x,t, \alpha)],\]

for $(x, t) \in X$ and $ \alpha \in [0, 1]$, is said to have generalized Seikkala partial derivative ${\partial \tilde{f}} \over {\partial t}$ if  both partial derivatives $ {\partial f_{1}(x, t, \alpha)} \over {\partial t}$ and  ${\partial f_{2}(x, t, \alpha)} \over {\partial t}$ exist and continuous for each $(x, t) \in X$, for all $\alpha$ and 

\[\Big ({{\partial \tilde{f}} \over {\partial t}}\Big)_{\alpha} = \Big [ \min \Big \{ {{\partial f_{1}} \over {\partial t}}, {{\partial f_{2}} \over {\partial t}} \Big \}, \max \Big \{ {{\partial f_{1}} \over {\partial t}}, {{\partial f_{2}} \over {\partial t}} \Big \} \Big], \]
for all $\alpha$, defines a fuzzy number for each $(x, t) \in X$. In similar way, we can define generalized Seikkala partial derivative ${\partial \tilde{f}} \over {\partial x}$.

\end{definition}

\begin{example}
Consider a fuzzy-valued function $\tilde{f}: X \to F(\mathbb{R})$ by $\tilde{f}(t,x) = \tilde{c} \odot (e^{t}\sin{x})$, for $(x, t) \in \mathbb{R} \times [0, \pi]$. It is easily checked that the gS-partial derivative ${\partial \tilde{f}} \over {\partial x}$ of $\tilde{f}$ exists.
\end{example}

Now we define generalized Seikkala differentiability of fuzzy-valued function $\tilde{f}: X \subset \mathbb{R}^{2} \to F(\mathbb{R})$.

\begin{definition}
A fuzzy-valued function $\tilde{f}: X \to F(\mathbb{R})$ is said to be generalized Seikkala differentiable if both generalized Seikkala partial derivatives ${\partial \tilde{f}} \over {\partial x}$ and ${\partial \tilde{f}} \over {\partial t}$ exists and the fuzzy partial derivatives are continuous.
\end{definition}

The second order generalized Seikkala partial derivatives are defined as follows:

\begin{definition}
If a fuzzy-valued function $\tilde{f}: X \to F(\mathbb{R})$ is generalized Seikkala differentiable then its second order Seikkala partial derivative ${\partial^2 \tilde{f}} \over {\partial t^2}$ is exists if  both partial derivatives $ {\partial^2 f_{1}} \over {\partial t^2}$ and  ${\partial f_{2}} \over {\partial t^2}$ exist and continuous for each $(x, t) \in X$, for all $\alpha$ and 

\[\Big ({{\partial^2 \tilde{f}} \over {\partial t^2}}\Big)_{\alpha} = \Big [ \min \Big \{ {{\partial^2 f_{1}} \over {\partial t^2}}, {{\partial^2 f_{2}} \over {\partial t^2}} \Big \}, \max \Big \{ {{\partial^2 f_{1}} \over {\partial t^2}}, {{\partial^2 f_{2}} \over {\partial t^2}} \Big \} \Big], \]
for all $\alpha$, defines a fuzzy number for each $(x, t) \in X$. In similar way, we can define other second order generalized Seikkala partial derivatives.

\end{definition}

\section{Fuzzy wave equation}


\subsection{Fuzzy model} 

The form of second order partial differential equation extensively occurred is the wave equation. Examples include disruption of a body of fluid, vibration of string instruments, vibration of membrane and pressure perturbations in air. In these cases, if the amplitude of the disturbance is sufficiently small, the perturbation variable characterizing the disturbance will satisfy the wave equation. Under some physical sensible assumptions, an one-dimensional wave equation is derived as 
\begin{equation}
{{\partial^2 u}\over{\partial t^2}} = c^2 {{\partial^2 u}\over{\partial x^2}},
\end{equation}
where $c^2$ is called wave speed and $u(x,t)$ is the displacement of the string. These assumptions make parameters constant or precise. For instance, we assume that string is uniform. That is, mass per unit length is constant. To study the problem in real sense, we consider imprecise variables and parameters. Modeling the problem of wave equation with imprecise parameters and uncertain boundary and initial conditions lead to a fuzzy model of wave equation.  
\begin{equation}\label{eq2.1}
{{\partial^2 \tilde{u}}\over{\partial t^2}} = c^2 \odot {{\partial^2 \tilde{u}}\over{\partial x^2}},
\end{equation}
where $\tilde{u}(x,t)$ is the fuzzy displacement represented by fuzzy number at each $(x, t) \in [0, L_{1}] \times [0, L{2}]$, $ L_{1}, L_{2} > 0$. The fuzzy parameters involve in the boundary and initial conditions are also expressed as fuzzy numbers and ${{\partial^2 \tilde{u}}\over{\partial t^2}}$ and ${{\partial^2 \tilde{u}}\over{\partial x^2}}$ represent second order fuzzy partial derivatives of fuzzy-valued function $\tilde{u}(x,t)$.

\subsection{Solution}
Elementary fuzzy partial differential equations are studied by Buckley and Feuring in \cite{BU}. They have not considered the solution in Fourier series form. Some researchers have studied non-homogeneous fuzzy wave equation involving constant $K$. The solution of fuzzy wave equation fully depend on $K$. Here we consider the homogeneous fuzzy partial differential equation in the form of fuzzy wave equation does not involve constant $K$.  It is observed that BF-solution does not exist for the problem (\ref{eq2.1}), as we can not apply the condition for existence to find solution to this problem. We find the Seikkala solution (S-solution) of (\ref{eq2.1}) using generalized Seikkala partial derivatives.

\subsection{Seikkala solution approach}

The fuzzy function $\tilde{u}(x,t)$ is a S-solution of problem (\ref{eq2.1}) if generalized Seikkala partial derivatives exist and satisfy the equation. Let $\tilde{u}_{\alpha}(x,t) = [u_{1}(x,t,\alpha), u_{2}(x,t,\alpha)]$. We re-write the equation (\ref{eq2.1}) as system of crisp partial differential equations 

\begin{equation}\label{eq4}
{{\partial^2 u_1(x,t,\alpha)}\over{\partial t^2}} = c^2 {{\partial^2 u_{1}(x,t,\alpha)}\over{\partial x^2}}
\end{equation}

\begin{equation}\label{eq5}
{{\partial^2 u_2(x,t, \alpha)}\over{\partial t^2}} = c^2 {{\partial^2 u_{2}(x,t,\alpha)}\over{\partial x^2}}
\end{equation}

for all $(x,t) \in I_{1} \times I_{2}$ and all $\alpha \in [0,1]$. The fuzzy boundary conditions are $\tilde{u} (0,t) = \tilde{C}_{1}$ and $\tilde{u} (L,t) = \tilde{C}_{2}$, $t > 0$ and fuzzy initial conditions are $\tilde{u} (x,0) = \tilde{f}(x)$ and $\tilde{u}_{t}(x,0) = \tilde{g}(x)$, $0 < x < L$, where $\tilde{C}_{1}$, $\tilde{C}_{2}$ are fuzzy numbers and $\tilde{f}(x)$, $\tilde{g}(x)$ are fuzzy-valued functions of $x$. We write boundary conditions in terms of $\alpha$-level sets as 

\begin{equation}\label{bc1}
 u_{1}(0,t,\alpha) = c_{11}(\alpha),~ u_{2}(0,t,\alpha) = c_{12}(\alpha),
\end{equation}

\begin{equation}\label{bc2}
 u_{1}(L,t,\alpha) = c_{21}(\alpha),~ u_{2}(L,t,\alpha) = c_{22}(\alpha).
\end{equation}

The initial conditions 

\begin{equation}\label{ic1}
 u_{1}(x,0, \alpha) = {f}_{1}(x, \alpha),~ u_{2}(x,0, \alpha) = {f}_{2}(x, \alpha),
\end{equation}

\begin{equation}\label{ic2}
 {{\partial u_{1}(x,0, \alpha)} \over {\partial t}} = {g}_{1}(x, \alpha),~ {{\partial u_{2}(x,0, \alpha)} \over {\partial t}} = {g}_{2}(x, \alpha).
\end{equation}

Let $u_{i}(x,t, \alpha)$ solve equations (\ref{eq4}) and (\ref{eq5}) with boundary conditions (\ref{bc1}) and (\ref{bc2}) and initial conditions (\ref{ic1}) and (\ref{ic2}) , $i = 1,2$. If

\begin{equation}
[u_{1}(x,t, \alpha), u_{2}(x,t, \alpha)]
\end{equation}
 defines the $\alpha$-level set of a fuzzy number, for each $(x,t) \in I_{1} \times I_{2}$, then $\tilde{u}(x,t)$ is the S-solution.

\begin{remark}
The non-homogeneous elementary fuzzy partial differential equations which are solved in \cite{BU} using BF- solution and Seikkala solution (S-solution) are also solved using gS-derivatives and S-solution approach. Because, fuzzy-valued functions which are Seikkala differentiable are also generalized Seikkala differentiable and therefore S-solution exists.
\end{remark}


\section{Solution of a fuzzy wave equation involves Fourier series}
In this section, we find the solution of a fuzzy wave equation whose crisp solution is expressed in terms of Fourier series. Consider the fuzzy wave equation given in (\ref{eq2.1}) with fuzzy boundary conditions $\tilde{u}(0,t) = \tilde{u}(1,t) = \tilde{0} $ and fuzzy initial conditions $\tilde{u}(x,0) = \tilde{U_{0}}$, where $\tilde{U_{0}}$ is a fuzzy number, $\tilde{u}_{t}(x,0) = \tilde{0}$. First, we solve equations (\ref{eq4}) and (\ref{eq5}) subject to conditions

\begin{equation}
u_{i}(0,t,\alpha) = u_{i}(1,t, \alpha) = 0
\end{equation}
\begin{equation}
u_{i}(x,0,\alpha) = U_{0i}(\alpha), {{\partial u_{i}(x,0, \alpha)} \over {\partial t}} = 0,
\end{equation}
for $i = 1,2$. The solution is 
\begin{equation}\label{sol1}
u_{i} (x,t, \alpha) = U_{0i}(\alpha) {{4} \over {\pi}} \sum_{n = 0}^{\infty} {{\sin{((2n+1)x)}} \over {(2n+1)}} \cos{((2n + 1)t)},
\end{equation}
for $i = 1,2$. The solution (\ref{sol1}) is obtained using Separation of variables method.\\

If $[u_{1}(x,t, \alpha), u_{2}(x,t, \alpha)]$ defines $\alpha$-level sets of a fuzzy number for $x \in I_{1}$ and $t \in I_{2}$, then it is a fuzzy solution of (\ref{eq2.1}) with specified fuzzy boundary and initial conditions. Since $u_{i}(x,t, \alpha)$ are continuous and $u_{1}(x,t,1) = u_{2}(x,t,1)$, what we need to check is 
${{\partial u_{1}} \over {\partial \alpha}} \geq 0 $ and ${{\partial u_{2}} \over {\partial \alpha}} \leq 0 $. Since $\tilde{U_{0}}$ is a fuzzy number, we have $U_{01}^{\prime}(\alpha) > 0 $ and $U_{02}^{\prime}(\alpha) < 0 $ (by Theorem \ref{thm21} and assumption of continuity of $U_{01}(\alpha)$ and $U_{02}(\alpha)$). For the S-solution to exist

\begin{equation}
{{\partial u_{1}} \over {\partial \alpha}} = U_{01}^{\prime}(\alpha){{4} \over {\pi}} \sum_{n = 0}^{\infty} {\sin{((2n+1)x)} \over {(2n+1)}} \cos{((2n + 1)t)}
\end{equation}

should be non-negative, for all $\alpha \in [0,1]$ and 

\begin{equation}
{{\partial u_{2}} \over {\partial \alpha}} = U_{02}^{\prime}(\alpha){{4} \over {\pi}} \sum_{n = 0}^{\infty} {\sin{((2n+1)x)} \over {(2n+1)}} \cos{((2n + 1)t)}
\end{equation}

should be non-positive, for all $\alpha \in [0,1]$ and $(x,t) \in I_{1} \times I_{2}$. For that 
\begin{equation}\label{e1}
z(x,t) = {{4} \over {\pi}} \sum_{n = 0}^{\infty} {\sin{((2n+1)x)} \over {(2n+1)}} \cos{((2n + 1)t)}  
\end{equation} 
should be non-negative for $(x,t) \in I_{1} \times I_{2} $. 

\subsection{Analysis of results}

When $n = 0$, i.e., only first term in the series, we have

\[ z(x,t) = {{4} \over {\pi}} {sin{(x)}} \cos{(t)}  \]

is non-negative( see surface $z(x,t)$ in \textbf{Fig. 1}). Hence S-solution of the fuzzy wave equation exists for $x \in [0,\pi]$, $t \in [0,\pi/2]$ and it is given as 

\begin{equation}\label{solw0}
\tilde{u}(x,t) = \tilde{U}_{0} \odot {{4} \over {\pi}} {{\sin{(x)}} \cos{(t)}},
\end{equation}
as generalized Sekkala partial derivatives of $\tilde{u}(x,t)$ exist for $(x,t) \in [0, \pi] \times [0, \pi/2]$ where as Seikkala derivatives do not exist because the derivative of $\sin(x)$ is negative in the interval $[\pi /2, \pi]$ and derivative of $\cos(t)$ is also negative in $[0, \pi/ 2]$.

\begin{figure}[h]\label{fig1}
\begin{center}
\includegraphics[width=4.0in]{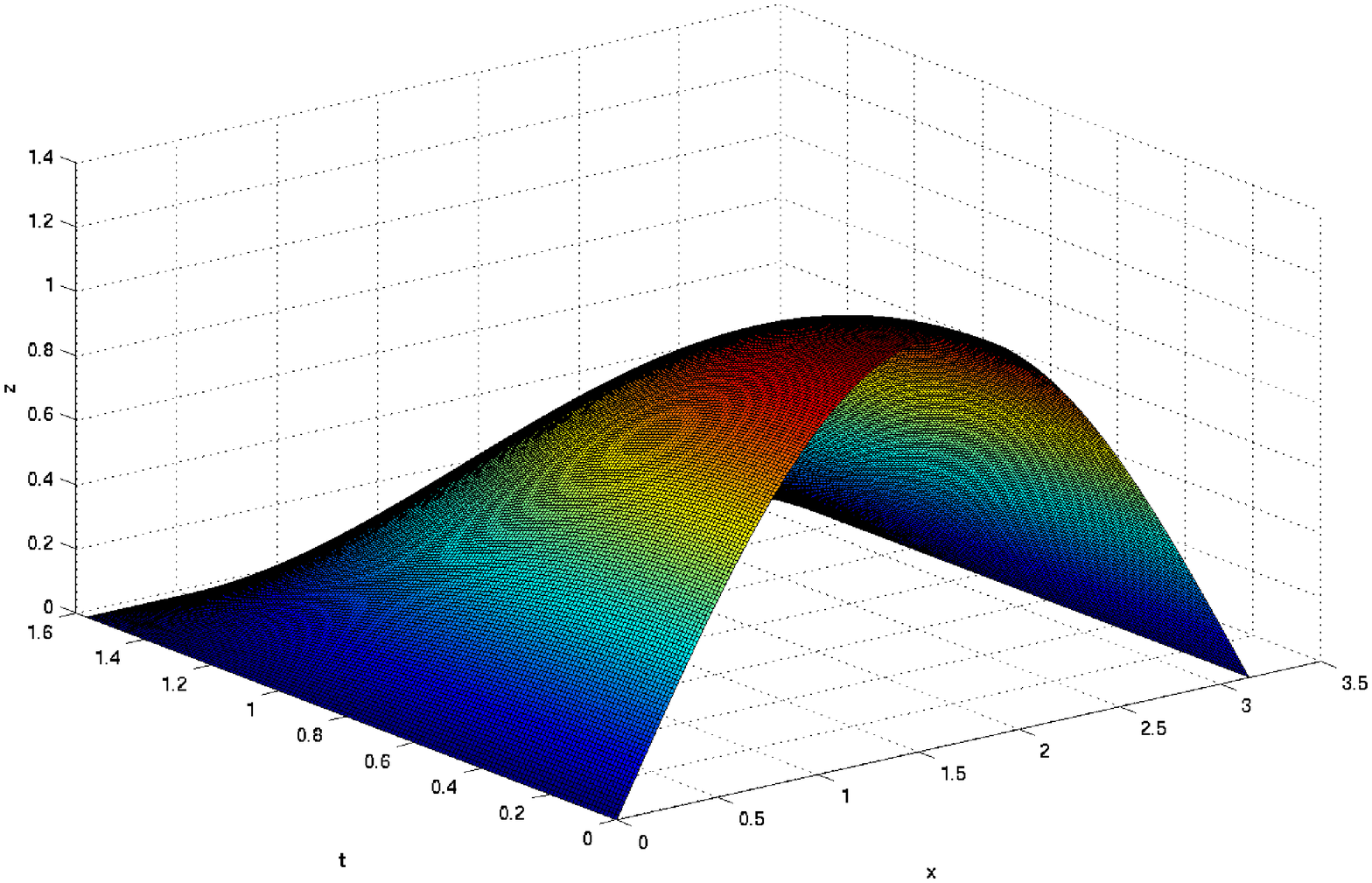}
\caption{$z(x,t) = {{4} \over {\pi}} sin{(x)} \cos{(t)}$}
\end{center}
\end{figure}
Now we take $n = 1$, we have 

\[
z(x,t) = {{4} \over {\pi}} \sum_{n = 0}^{1}{ {sin{((2n+1)x)} \cos{((2n + 1)t)} }  \over {(2n+1)}}
\] 
is non-negative for $x \in [0,0.78]$ and $t \in [0,0.78]$ (see \textbf{Fig. 2}). Therefore, S-solution exist in this domain and it as given as following

\begin{equation}\label{solw1}
\tilde{u}(x,t) = \tilde{U}_{0} \odot {{4} \over {\pi}} \sum_{n = 0}^{1} {{\sin{((2n+1)x)} \cos{((2n+1)t)}} \over {(2n + 1)}}.
\end{equation}
\begin{figure}[h]\label{fig2}
\begin{center}
\includegraphics[width=4.0in]{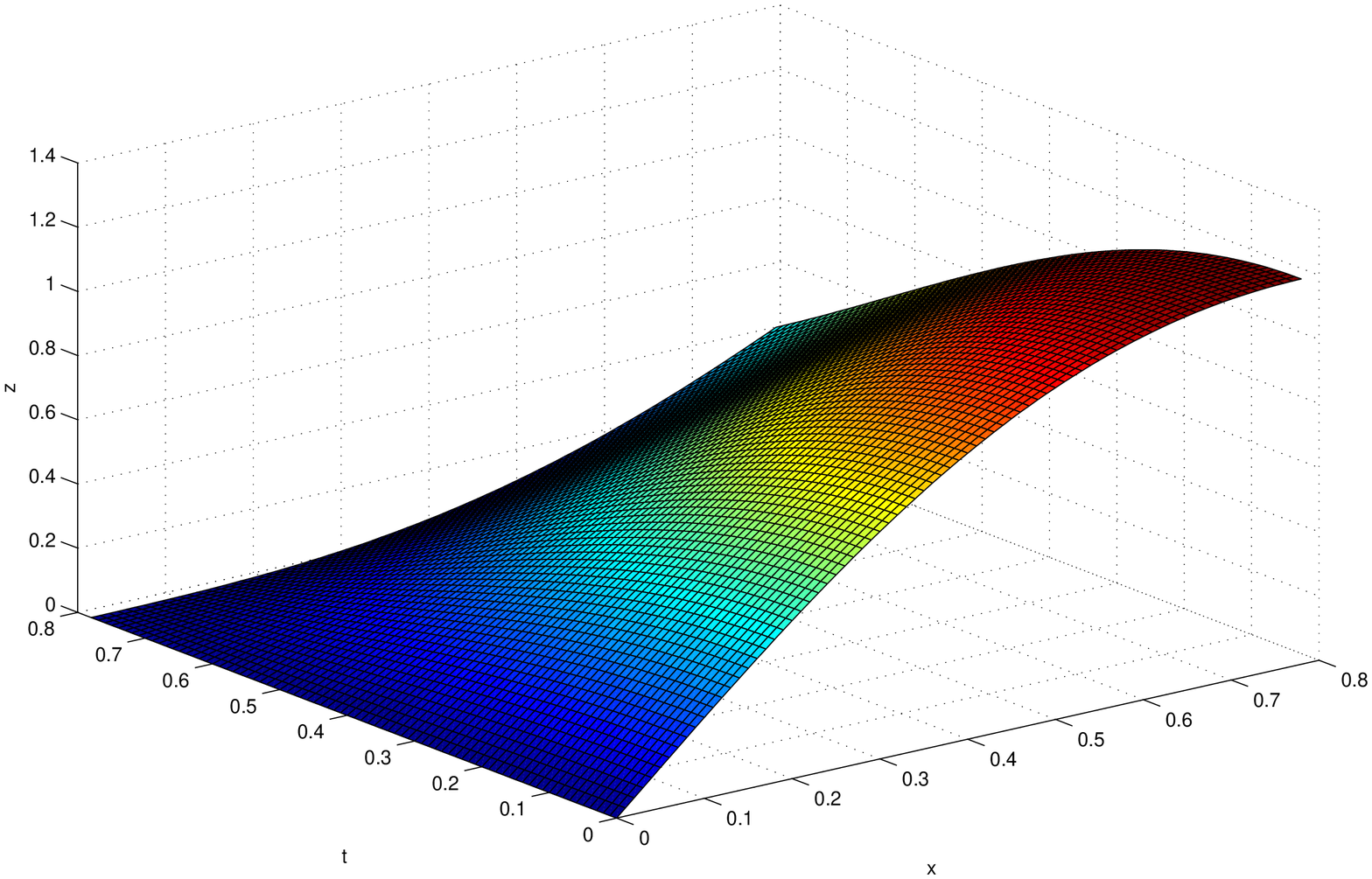}
\caption{$z(x,t) = {{4} \over {\pi}} \sum_{n=0}^{1}{{sin{((2n+1)x)}  \cos{((2n + 1)t)}}\over {(2n+1)}} $}
\end{center}
\end{figure}
For $n = 2$, we have 
\[z(x,t) = {{4} \over {\pi}} \sum_{n = 0}^{2}{ {sin{((2n+1)x)} \cos{((2n + 1)t)} }  \over {(2n+1)}}
\] is non-negative for $x \in [0,0.525]$, $t \in [0,0.525]$ (see \textbf{Fig. 3}). The S-solution is
\begin{equation}\label{solw2}
\tilde{u}(x,t) = \tilde{U}_{0} \odot {{4} \over {\pi}} \sum_{n = 0}^{2} {{\sin{((2n+1)x)} \cos{((2n+1)t)}} \over {(2n + 1)}}.
\end{equation}
\begin{figure}[h]\label{fig3}
\begin{center}
\includegraphics[width=4.0in]{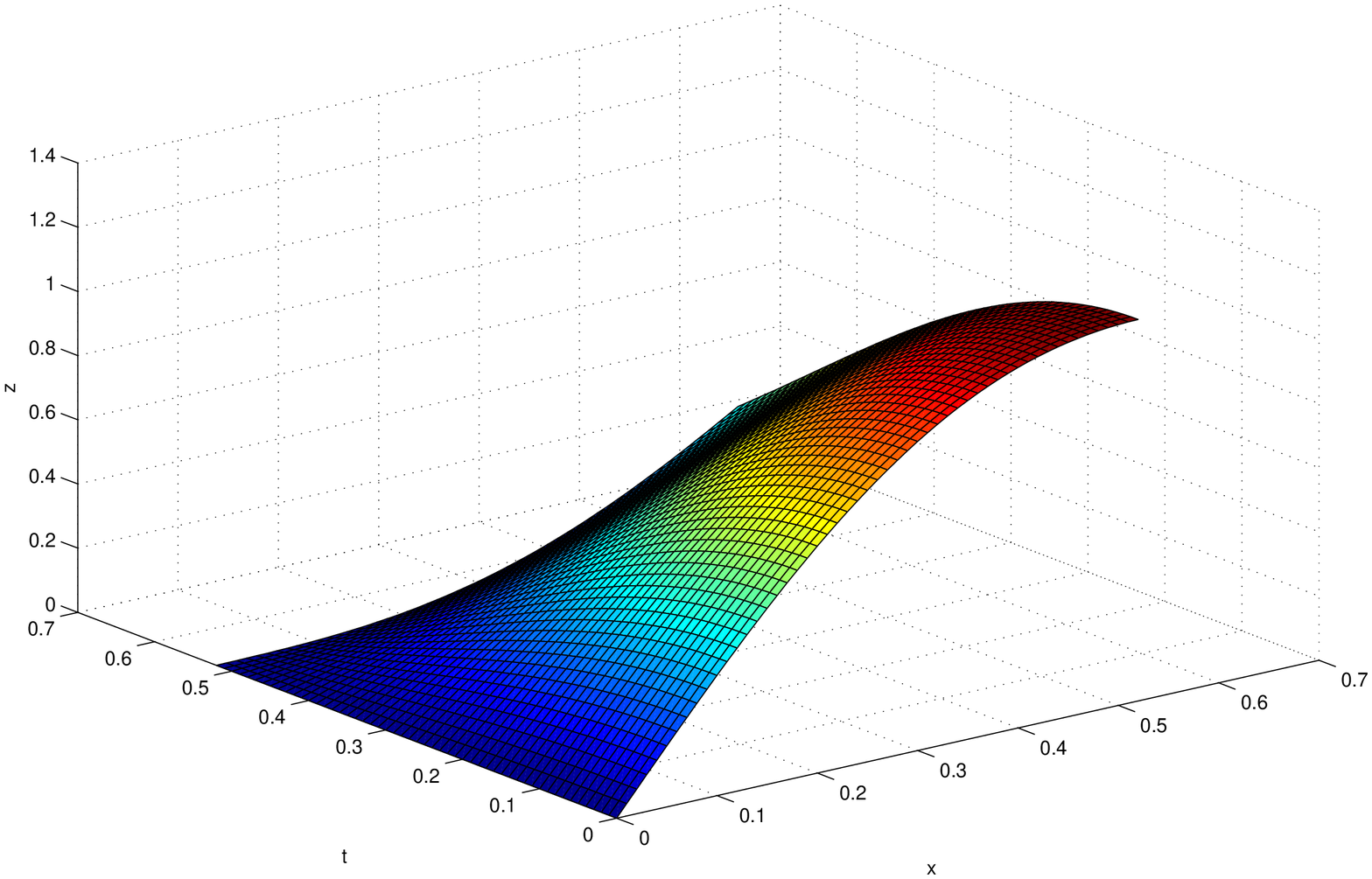}
\caption{$z(x,t) = {{4} \over {\pi}} \sum_{n=0}^{2}{sin{((2n+1)x)} \cos{((2n + 1)t)} \over {(2n+1)}} $}
\end{center}
\end{figure}
For $n = 3$, 
\[z(x,t) = {{4} \over {\pi}} \sum_{n = 0}^{3}{ {sin{((2n+1)x)} \cos{((2n + 1)t)} }  \over {(2n+1)}}
\]  
is non-negative for $x \in [0,0.39996]$, $t \in [0,0.39996]$ (see \textbf{Fig. 4}). S-solution exist in this domain.

\begin{equation}\label{solw3}
\tilde{u}(x,t) = \tilde{U}_{0} \odot {{4} \over {\pi}} \sum_{n = 0}^{3} {{\sin{((2n+1)x)} \cos{((2n+1)t)} \over {(2n + 1)}}}.
\end{equation}
 But if we increase $x$ and $t$ slightly, we have some negative values of $z(x,t)$. For instance, consider $x \in [0,0.4]$, $t \in [0,0.4]$ and see the surface in \textbf{Fig. 5}. Therefore, we S-solution of fuzzy wave equation exists in domain $x \in [0,0.39996]$, $t \in [0,0.39996]$.
\begin{figure}[h]\label{fig4}
\begin{center}
\includegraphics[width=4.0in]{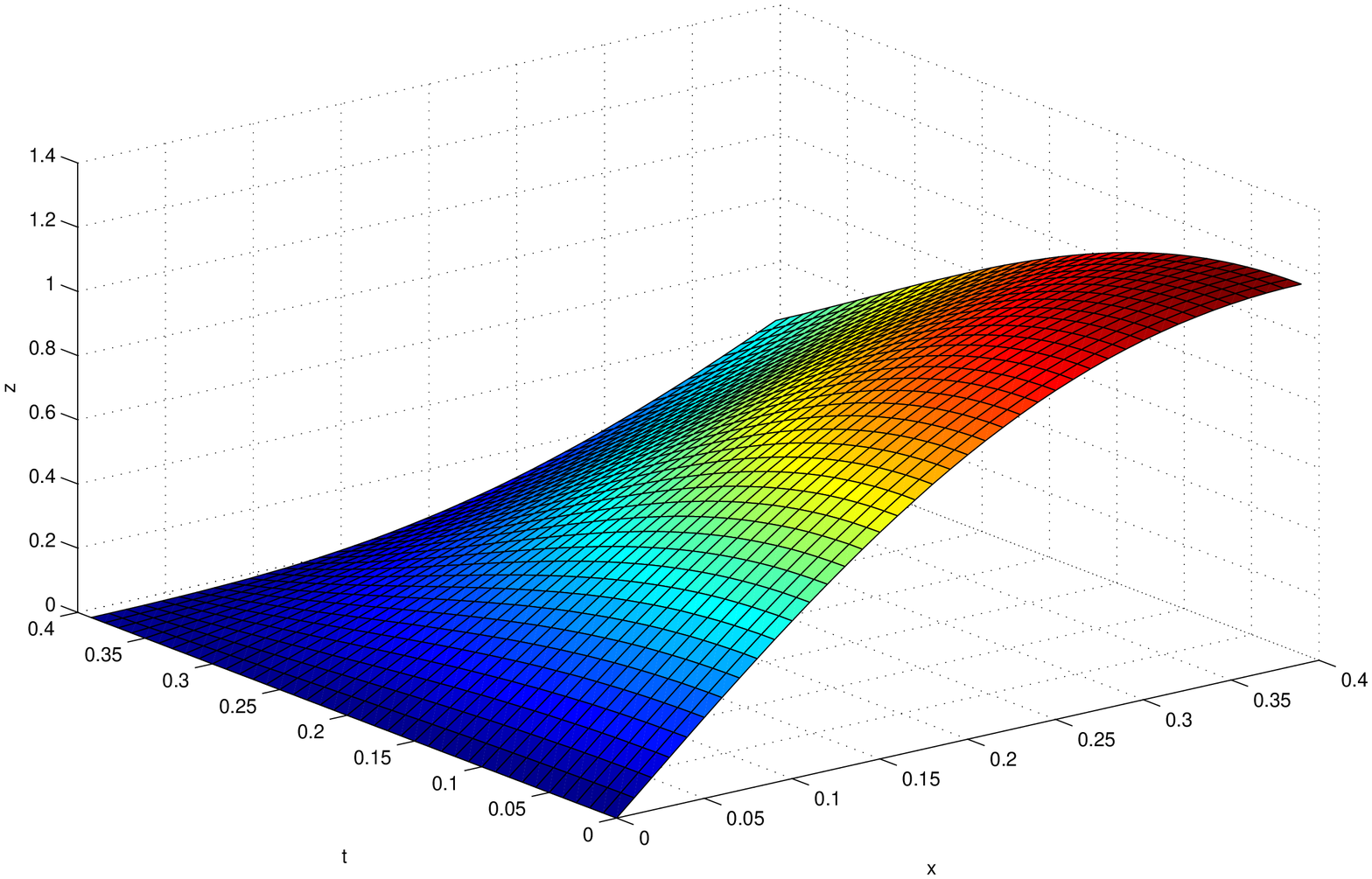}
\caption{$z(x,t) = {{4} \over {\pi}} \sum_{n=0}^{3}{sin{((2n+1)x)} \cos{((2n + 1)t)} \over {(2n+1)}} $}
\end{center}
\end{figure}

\begin{figure}[h]\label{fig5}
\begin{center}
\includegraphics[width=4.0in]{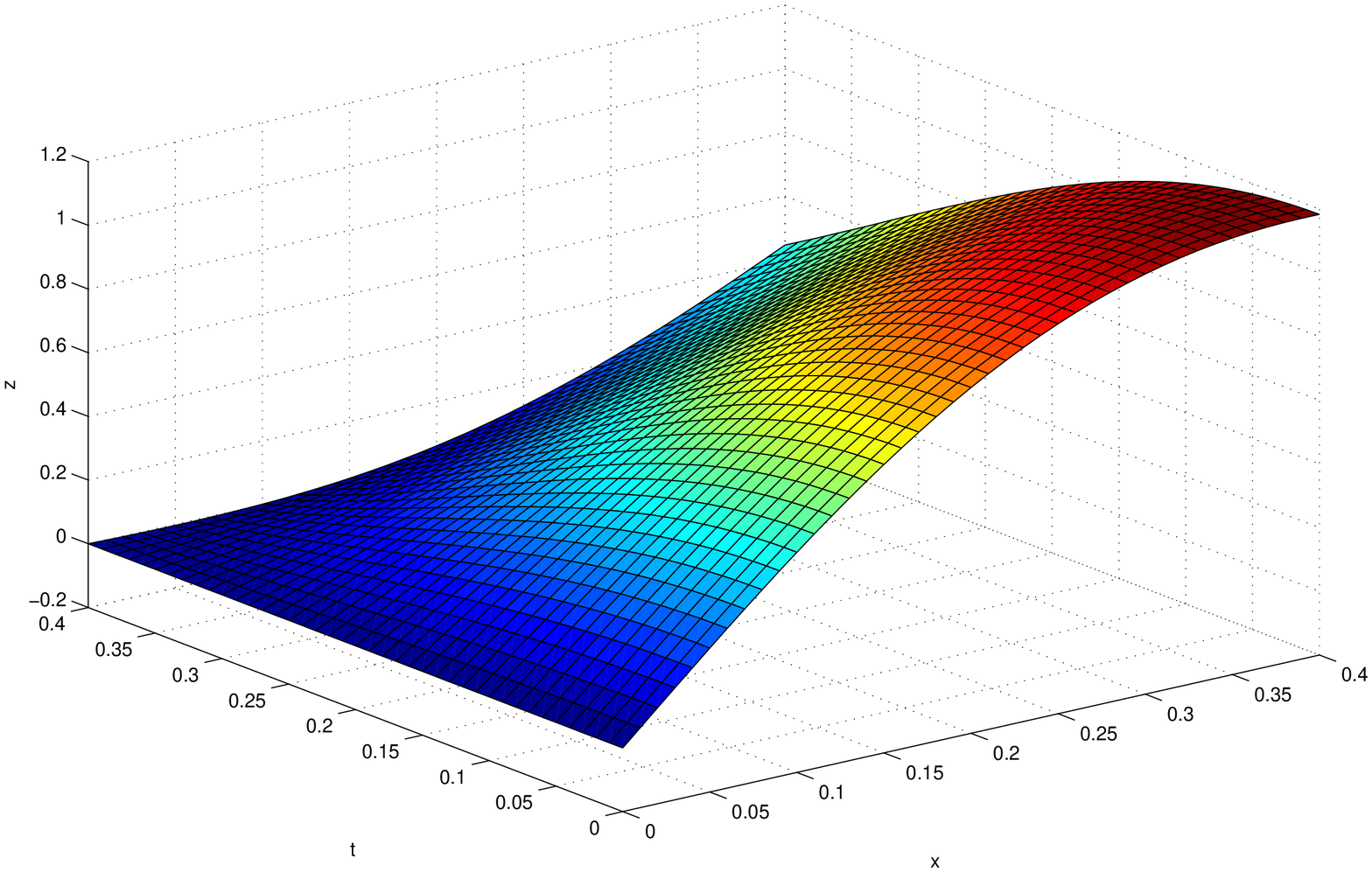}
\caption{$z(x,t) = {{4} \over {\pi}} \sum_{n=0}^{3}{sin{((2n+1)x)}\cos{((2n + 1)t)}  \over {(2n+1)}} $}
\end{center}
\end{figure}


\section{Conclusions}
We introduced new generalized Seikkala derivatives of fuzzy-valued function and studied the solution of fuzzy wave equation whose crisp solution involves Fourier series. We conclude that
\begin{itemize}
\item [(a)] A larger class of fuzzy-valued functions are generalized Seikkala differentiable.
\item [(b)] Homogeneous fuzzy partial differential equations can not solved using the Buckley- Feuring approach. Seikkala solution approach is applicable in this situation.
\item [(c)] Using Seikkala solution approach and generalized Seikkala partial derivatives, the solution of fuzzy wave equation is proposed whose crisp solution is expressed in terms of Fourier series.
\item [(d)] As we increase the number of terms of Fourier series in the fuzzy solution, the domain of fuzzy solution decreased.
\end{itemize}

\end{document}